\documentclass[reqno,12pt]{amsart}%
\usepackage{amssymb}
\usepackage{amsfonts}
\usepackage{amsmath}
\usepackage{graphicx}
\usepackage{amscd,amsthm}%
\setcounter{MaxMatrixCols}{30}
\newtheorem{theorem}{Theorem}
\theoremstyle{plain}
\newtheorem{acknowledgement}{Acknowledgement}

\newtheorem{corollary}{Corollary}

\newtheorem{definition}{Definition}

\newtheorem{remark}{Remark}

\numberwithin{equation}{section}

\setlength{\textwidth}{6.5in}
\setlength{\textheight}{8.5in}
\setlength{\oddsidemargin}{0.0in}
\setlength{\evensidemargin}{0.0in}
\begin{document}
\author{}
\title{}
\maketitle

\begin{center}
\thispagestyle{empty} \pagestyle{myheadings} \markboth{\bf Yilmaz Simsek
}{\bf Functional equations from generating functions:...}

\textbf{{\Large Functional equations from generating functions: a novel
approach to deriving identities for the \textbf{Bernstein basis functions}}}

\bigskip

\textbf{Yilmaz Simsek}\\[0pt]

Department of Mathematics, Faculty of Science University of Akdeniz TR-07058
Antalya, Turkey, ysimsek@akdeniz.edu.tr\newline

\bigskip

\textbf{{\large {Abstract}}}\medskip
\end{center}

\begin{quotation}
The main aim of this paper is to provide a novel approach to deriving
identities for the Bernstein polynomials using functional equations. We derive
various functional equations and differential equations using generating
functions. Applying these equations, we give new proofs for some standard
identities for the Bernstein basis functions, including formulas for sums,
alternating sums, recursion, subdivision, degree raising, differentiation and
a formula for the monomials in terms of the Bernstein basis functions. We also
derive many new identities for the Bernstein basis functions based on this
approach. Moreover, by applying the Laplace transform to the generating
functions for the Bernstein basis functions, we obtain some interesting series
representations for the Bernstein basis functions.
\end{quotation}

\bigskip

\noindent\textbf{2010 Mathematics Subject Classification.} 14F10, 12D10,
26C05, 26C10, 30B40, 30C15, 44A10.

\bigskip

\noindent\textbf{Key Words and Phrases.} Bernstein polynomials; generating
functions; functional equations, Laplace transform.

\section{Introduction}

The Bernstein polynomials have many applications: in approximations of
functions, in statistics, in numerical analysis, in $p$-adic analysis and in
the solution of differential equations. It is also well-known that in Computer
Aided Geometric Design polynomials are often expressed in terms of the
Bernstein basis functions. These polynomials are called \textit{Bezier curves
and surfaces}.

Many of the known identities for the Bernstein basis functions are currently
derived in an \textit{ad hoc} fashion, using either the binomial theorem, the
binomial distribution, tricky algebraic manipulations or blossoming. The main
purpose of this work is to construct novel functional equations for the
Bernstein polynomials. Using these functional equations and Laplace transform,
we develop a novel approach both to standard and to new identities for the
Bernstein polynomials. Thus these polynomial identities are just the residue
of a much more powerful system of functional equations.

The remainder of this study is organized as follows: We find several
\textit{functional equations} and \textit{differential equations} for the
Bernstein basis functions using generating functions. From these equations,
many properties of the Bernstein basis functions are then derived. For
instance, we give a new proof of the recursive definition of the Bernstein
basis functions as well as a novel derivation for the two term formula for the
derivatives of the $n$th degree Bernstein basis functions. Using functional
equations, we give new derivations for the sum and alternating sum of the the
Bernstein basis functions and a formula for the monomials in terms of the
Bernstein basis functions. We also derive identities corresponding to the
degree elevation and subdivision formulas for Bezier curves. We prove many new
identities for the Bernstein basis functions. Finally, we give some
applications of the Laplace transform to the generating functions for the
Bernstein basis functions. We obtain interesting series representations for
the Bernstein basis functions. We also give some remarks and observations
related to the Fourier transform and complex generating functions for the
Bernstein basis functions.

\section{Generating Functions}

The Bernstein polynomials and related polynomials\ have been studied and
defined in many different ways, for examples by $q$-series, complex functions,
$p$-adic Volkenborn integrals and many algorithms. In this section, we provide
novel generating functions for the Bernstein basis functions.

The Bernstein basis functions $B_{k}^{n}(x)$ are defined as follows:

\begin{definition}%
\begin{equation}
B_{k}^{n}(x)=\left(
\begin{array}
[c]{c}%
n\\
k
\end{array}
\right)  x^{k}(1-x)^{n-k}, \label{1BB3a}%
\end{equation}
where%
\[
\left(
\begin{array}
[c]{c}%
n\\
k
\end{array}
\right)  =\frac{n!}{k!(n-k)!},
\]
$k=0,1,\cdots,n$ cf. \cite{AcikgozSerkan}-\cite{Simsek Acikgoz}.
\end{definition}

Generating functions for the Bernstein basis functions can be defined as follows:

\begin{definition}
\label{Def-1}%
\begin{equation}
f_{\mathbb{B},k}(x,t)=\sum_{n=0}^{\infty}B_{k}^{n}(x)\frac{t^{n}}{n!}.
\label{A-0}%
\end{equation}

\end{definition}

Note that there is one generating function for each value of $k$.

\begin{theorem}
\label{TT1}%
\begin{align}
f_{\mathbb{B},k}(x,t)  &  =\frac{t^{k}x^{k}e^{(1-x)t}}{k!}.\label{1BB3}\\
& \nonumber
\end{align}

\end{theorem}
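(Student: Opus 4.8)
The plan is to substitute the explicit formula for the Bernstein basis functions directly into the generating function and recognize the resulting series as an exponential.

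First I would write out the definition in \eqref{A-0} and insert the formula \eqref{1BB3a}. Since $B_k^n(x)=0$ whenever $n<k$ (the binomial coefficient vanishes), the sum effectively begins at $n=k$, so I obtain
\begin{equation}
f_{\mathbb{B},k}(x,t)=\sum_{n=k}^{\infty}\binom{n}{k}x^{k}(1-x)^{n-k}\frac{t^{n}}{n!}. \nonumber
\end{equation}
Next I would expand the binomial coefficient as $\binom{n}{k}=\frac{n!}{k!(n-k)!}$, which cancels the factorial $n!$ in the denominator, leaving $\frac{1}{k!(n-k)!}$. This factorization is the crux of the argument: it disentangles the $k$-dependent factors from the summation index $n$.

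Then I would pull the factors that do not depend on $n$ outside the sum, namely $\frac{x^{k}}{k!}$, and reindex via $m=n-k$ to convert the remaining sum into a standard exponential series:
\begin{equation}
f_{\mathbb{B},k}(x,t)=\frac{x^{k}}{k!}\sum_{m=0}^{\infty}\frac{\bigl((1-x)t\bigr)^{m}t^{k}}{m!}=\frac{x^{k}t^{k}}{k!}\sum_{m=0}^{\infty}\frac{\bigl((1-x)t\bigr)^{m}}{m!}. \nonumber
\end{equation}
Recognizing the sum as $e^{(1-x)t}$ then yields the claimed identity \eqref{1BB3}.

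The only genuinely delicate point is the bookkeeping of the powers of $t$: the term $t^{n}=t^{k}\cdot t^{n-k}$ must be split so that $t^{k}$ joins the prefactor while $t^{n-k}=t^{m}$ pairs with $(1-x)^{m}$ to form $\bigl((1-x)t\bigr)^{m}$. Everything else is routine manipulation of absolutely convergent power series, so interchanging the order of summation and reindexing require no further justification beyond noting convergence for all $t$.
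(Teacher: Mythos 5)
Your proof is correct and follows essentially the same route as the paper: substitute the explicit formula \eqref{1BB3a} into \eqref{A-0}, note the sum starts at $n=k$, factor out $\frac{(xt)^k}{k!}$, and recognize the remaining series as the Taylor expansion of $e^{(1-x)t}$. The reindexing $m=n-k$ and the splitting $t^n=t^k\cdot t^{n-k}$ that you highlight are exactly the steps the paper performs implicitly.
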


\begin{proof}
By substituting (\ref{1BB3a}) into the right hand side of (\ref{A-0}), we get%
\[
\sum_{n=0}^{\infty}B_{k}^{n}(x)\frac{t^{n}}{n!}=\sum_{n=0}^{\infty}\left(
\left(
\begin{array}
[c]{c}%
n\\
k
\end{array}
\right)  x^{k}(1-x)^{n-k}\right)  \frac{t^{n}}{n!}.
\]
Therefore%
\[
\sum_{n=0}^{\infty}B_{k}^{n}(x)\frac{t^{n}}{n!}=\frac{\left(  xt\right)  ^{k}%
}{k!}\sum_{n=k}^{\infty}(1-x)^{n-k}\frac{t^{n-k}}{\left(  n-k\right)  !}.
\]
The right hand side of the above equation is a Taylor series for $e^{(1-x)t}$,
thus we arrive at the desired result.
\end{proof}

We give some alternative forms of the generating functions in (\ref{A-0}) as
follows:%
\begin{equation}
\sum_{n=0}^{\infty}B_{k}^{n}(x)\frac{t^{n}}{n!}e^{xt}=\frac{t^{k}x^{k}e^{t}%
}{k!}, \label{g1}%
\end{equation}%
\begin{equation}
\sum_{n=0}^{\infty}B_{k}^{n}(x)\frac{t^{n}}{n!}e^{-t}=\frac{t^{k}x^{k}e^{-xt}%
}{k!}, \label{g2}%
\end{equation}
and%
\begin{equation}
\sum_{n=0}^{\infty}B_{k}^{n}(x)\frac{t^{n}}{n!}e^{(x-1)t}=\frac{t^{k}x^{k}%
}{k!}. \label{g3}%
\end{equation}
By using the above alternative forms we derive some new identities for the
Bernstein basis functions.

\begin{remark}
If we replace $x$ by $\frac{x-a}{b-a}$in (\ref{1BB3}), where $a<b$, then%
\[
\frac{t^{k}\left(  \frac{x-a}{b-a}\right)  ^{k}e^{(\frac{b-x}{b-a})t}}%
{k!}=\sum_{n=0}^{\infty}B_{k}^{n}(x,a,b)\frac{t^{n}}{n!},
\]
where $B_{k}^{n}(x,a,b)$ denotes the generalized Bernstein basis function
defined by:%
\[
B_{k}^{n}(x,a,b)=\left(
\begin{array}
[c]{c}%
n\\
k
\end{array}
\right)  \frac{\left(  x-a\right)  ^{k}(b-x)^{n-k}}{(b-a)^{m}}%
\]
cf. \cite{GoldmanBOOK}.
\end{remark}

A \textit{Bernstein polynomial} $\mathcal{P}(x)$ is a polynomial represented
in the Bernstein basis functions:%
\begin{equation}
\mathcal{P}(x)=\sum_{k=0}^{n}c_{k}^{n}B_{k}^{n}(x) \label{1BB2}%
\end{equation}
cf. \cite{GoldmanBOOK}. Simsek \cite{SimsekSpringer}-\cite{SimsekAMC}, Simsek
\textit{et al}. \cite{Simsek Acikgoz}\ and Acikgoz \textit{et al}.
\cite{AcikgozSerkan} also studied on the generating function for Bernstein
basis function.

\section{Identities for the Bernstein basis functions}

In this section, we use the generating functions for the Bernstein basis
functions to derive a family of functional equations. Using these equations,
we derive a collection of identities for the Bernstein basis functions.

\subsection{Sums and Alternating sums}

From (\ref{1BB3}), we get the following functional equations:%
\begin{equation}
\sum_{k=0}^{\infty}f_{\mathbb{B},k}(x,t)=e^{t} \label{A-1}%
\end{equation}
and%
\begin{equation}
\sum_{k=0}^{\infty}(-1)^{k}f_{\mathbb{B},k}(x,t)=e^{(1-2x)t}. \label{A-2}%
\end{equation}

\begin{theorem}
\label{T2} (Sum of the Bernstein basis functions)%
\[
\sum_{k=0}^{n}B_{k}^{n}(x)=1.
\]

\end{theorem}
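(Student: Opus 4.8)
The plan is to exploit the functional equation (\ref{A-1}), which the paper has just established: $\sum_{k=0}^{\infty}f_{\mathbb{B},k}(x,t)=e^{t}$. The right-hand side has an immediate and well-known power series expansion, namely $e^{t}=\sum_{n=0}^{\infty}\frac{t^{n}}{n!}$. My strategy is to expand the left-hand side as a power series in $t$ as well, and then compare coefficients of $\frac{t^{n}}{n!}$ on the two sides. Since two formal power series are equal if and only if all their coefficients agree, this comparison should directly yield the desired identity.

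First I would substitute the definition (\ref{A-0}) of the generating function into the left-hand side of (\ref{A-1}), obtaining $\sum_{k=0}^{\infty}\sum_{n=0}^{\infty}B_{k}^{n}(x)\frac{t^{n}}{n!}$. The next step is to interchange the order of the two summations, rewriting this as $\sum_{n=0}^{\infty}\left(\sum_{k=0}^{\infty}B_{k}^{n}(x)\right)\frac{t^{n}}{n!}$. Here I would use the fact that $B_{k}^{n}(x)=0$ whenever $k>n$, which is clear from (\ref{1BB3a}) since the binomial coefficient $\binom{n}{k}$ vanishes in that range; this lets me replace the inner sum $\sum_{k=0}^{\infty}$ by the finite sum $\sum_{k=0}^{n}$, matching the statement of the theorem.

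Equating this with the expansion $e^{t}=\sum_{n=0}^{\infty}1\cdot\frac{t^{n}}{n!}$ and comparing the coefficient of $\frac{t^{n}}{n!}$ on each side gives
\[
\sum_{k=0}^{n}B_{k}^{n}(x)=1,
\]
which is exactly the claim.

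The main obstacle, though a mild one at the level of rigour this paper adopts, is justifying the interchange of the two infinite sums. Since everything is being treated as a formal power series in $t$ (or, if one prefers analytic justification, since the series for $e^{(1-x)t}$ converges absolutely for all $t$ and the $k$-sum telescopes into $e^{t}$), the rearrangement is legitimate and the coefficient comparison is valid. I expect the author to treat this step as routine, so the proof will essentially consist of writing down the two series and reading off the coefficient of $t^{n}/n!$.
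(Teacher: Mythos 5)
Your proposal is correct and follows exactly the paper's route: substitute the definition (\ref{A-0}) into the functional equation (\ref{A-1}), truncate the inner sum using $B_{k}^{n}(x)=0$ for $k>n$, and compare coefficients of $t^{n}/n!$ with the expansion of $e^{t}$. The only difference is that you spell out the interchange of summations and the vanishing of the binomial coefficient, steps the paper treats as implicit.
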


\begin{proof}
From (\ref{A-1}), one finds that%
\begin{equation}
\sum_{k=0}^{\infty}f_{\mathbb{B},k}(x,t)=\sum_{n=0}^{\infty}\frac{t^{n}}{n!}.
\label{A-3}%
\end{equation}
By combining (\ref{A-0}) and (\ref{A-3}), we get%
\[
\sum_{n=0}^{\infty}\left(  \sum_{k=0}^{n}B_{k}^{n}(x)\right)  \frac{t^{n}}%
{n!}=\sum_{n=0}^{\infty}\frac{t^{n}}{n!}.
\]
Comparing the coefficients of $\frac{t^{n}}{n!}$ on the both sides of the
above equation, we arrive at the desired result.
\end{proof}

\begin{theorem}
\label{T3} (Alternating sum of the Bernstein basis functions)%
\[
\sum_{k=0}^{n}(-1)^{k}B_{k}^{n}(x)=(1-2x)^{n}.
\]

\end{theorem}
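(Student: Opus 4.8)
The plan is to mirror the generating-function argument used for \thmref{T2}, but now starting from the alternating functional equation (\ref{A-2}) in place of (\ref{A-1}). The guiding idea is that the alternating sum $\sum_{k=0}^{n}(-1)^{k}B_{k}^{n}(x)$ is precisely the coefficient of $\frac{t^{n}}{n!}$ in the left-hand side of (\ref{A-2}), so the whole theorem reduces to reading off the corresponding coefficient on the right-hand side.

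First I would expand the right-hand side of (\ref{A-2}) as a Taylor series,
\[
e^{(1-2x)t}=\sum_{n=0}^{\infty}(1-2x)^{n}\frac{t^{n}}{n!}.
\]
Next I would rewrite the left-hand side using the definition (\ref{A-0}). Substituting the series for $f_{\mathbb{B},k}(x,t)$ and interchanging the order of summation gives
\[
\sum_{k=0}^{\infty}(-1)^{k}f_{\mathbb{B},k}(x,t)=\sum_{n=0}^{\infty}\left(\sum_{k=0}^{\infty}(-1)^{k}B_{k}^{n}(x)\right)\frac{t^{n}}{n!}.
\]
Here I would use that $B_{k}^{n}(x)=0$ whenever $k>n$, since the binomial coefficient $\binom{n}{k}$ vanishes in that range; this truncates the inner sum to the finite sum $\sum_{k=0}^{n}(-1)^{k}B_{k}^{n}(x)$.

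Equating these two expansions for the two sides of (\ref{A-2}) and comparing the coefficients of $\frac{t^{n}}{n!}$ then yields $\sum_{k=0}^{n}(-1)^{k}B_{k}^{n}(x)=(1-2x)^{n}$, as claimed. I do not expect any serious obstacle here: the entire content is already packaged in the functional equation (\ref{A-2}), which itself follows at once from the closed form (\ref{1BB3}), since $\sum_{k\ge 0}(-1)^{k}(xt)^{k}/k!=e^{-xt}$ combines with the factor $e^{(1-x)t}$ to produce $e^{(1-2x)t}$. The only step requiring even minor care is justifying the interchange of the two infinite sums, which is routine given the absolute convergence of the exponential series.
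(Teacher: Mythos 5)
Your proposal is correct and follows essentially the same route as the paper: substitute the definition (\ref{A-0}) into the left side of the functional equation (\ref{A-2}), expand $e^{(1-2x)t}$ on the right, and compare coefficients of $\frac{t^{n}}{n!}$. The only difference is that you spell out the truncation of the inner sum via $B_{k}^{n}(x)=0$ for $k>n$ and the verification of (\ref{A-2}) itself, details the paper leaves implicit.
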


\begin{proof}
By combining (\ref{A-2}) and (\ref{A-3}), we obtain%
\[
\sum_{n=0}^{\infty}\left(  \sum_{k=0}^{n}(-1)^{k}B_{k}^{n}(x)\right)
\frac{t^{n}}{n!}=\sum_{n=0}^{\infty}\frac{(1-2x)^{n}t^{n}}{n!}.
\]
Comparing the coefficients of $\frac{t^{n}}{n!}$ on the both sides of the
above equation, we arrive at the desired result.
\end{proof}

\begin{remark}
Goldman \cite{GoldmanBook3}-\cite[Chapter 5, pages 299-306]{GoldmanBook2}
derived the formula for the alternating sum of the Bernstein basis functions algebraically.
\end{remark}

\subsection{Subdivision}

From (\ref{1BB3}), we have the following functional equation:%
\begin{equation}
f_{\mathbb{B},j}(xy,t)=f_{\mathbb{B},j}\left(  x,ty\right)  e^{t\left(
1-y\right)  }. \label{1BB1Sub}%
\end{equation}
From this functional equation, we get the following identity which is the
basis for subdivision of Bezier curves cf. ( \cite{GoldmanBOOK},
\cite{GoldmanBook2}, \cite{GoldmanBook3}, \cite{SimsekSpringer}).

\begin{theorem}
\label{Theorem3}%
\[
B_{j}^{n}(xy)=%
{\displaystyle\sum\limits_{k=j}^{n}}
B_{j}^{k}(x)B_{k}^{n}(y).
\]

\end{theorem}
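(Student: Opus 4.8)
The plan is to extract the coefficient of $t^{n}/n!$ from both sides of the functional equation (\ref{1BB1Sub}), exactly in the spirit of the proofs of Theorems \ref{T2} and \ref{T3}. The left-hand side is, by the defining relation (\ref{A-0}) with $x$ replaced by $xy$,
\[
f_{\mathbb{B},j}(xy,t)=\sum_{n=0}^{\infty}B_{j}^{n}(xy)\frac{t^{n}}{n!},
\]
so the coefficient of $t^{n}/n!$ on the left is simply $B_{j}^{n}(xy)$, which is exactly the object appearing on the left of the claimed identity. All of the work therefore lies in expanding the right-hand side.

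To expand the right-hand side I would write each of its two factors as its own exponential generating series. Using (\ref{A-0}) with $ty$ in place of the time variable gives
\[
f_{\mathbb{B},j}(x,ty)=\sum_{k=0}^{\infty}B_{j}^{k}(x)\,y^{k}\frac{t^{k}}{k!},
\]
while the elementary expansion of the exponential gives $e^{t(1-y)}=\sum_{m=0}^{\infty}(1-y)^{m}\frac{t^{m}}{m!}$. The key step is then to multiply these two series. Since both are exponential generating functions, their product is governed by the binomial convolution, so the coefficient of $t^{n}/n!$ in the product equals $\sum_{k=0}^{n}\binom{n}{k}B_{j}^{k}(x)\,y^{k}(1-y)^{n-k}$.

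Two observations finish the argument. First, $B_{j}^{k}(x)$ carries the factor $\binom{k}{j}$, which vanishes whenever $k<j$; hence the convolution sum may be restarted at $k=j$ without changing its value. Second, the remaining factor $\binom{n}{k}y^{k}(1-y)^{n-k}$ is, by the definition (\ref{1BB3a}), precisely the Bernstein basis function $B_{k}^{n}(y)$. Substituting this in shows that the coefficient of $t^{n}/n!$ on the right-hand side is $\sum_{k=j}^{n}B_{j}^{k}(x)B_{k}^{n}(y)$, and comparing the two coefficient extractions yields the asserted identity.

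I expect the only delicate point to be the bookkeeping of the Cauchy (binomial) product, together with the observation that the lower limit of summation rises from $0$ to $j$ because of the vanishing binomial coefficient; everything else is a direct matching of coefficients of $t^{n}/n!$.
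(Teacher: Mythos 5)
Your proposal is correct and follows essentially the same route as the paper: both extract the coefficient of $t^{n}/n!$ from the functional equation (\ref{1BB1Sub}) by forming the binomial (Cauchy) product of the series for $f_{\mathbb{B},j}(x,ty)$ and $e^{t(1-y)}$, raise the lower summation limit to $k=j$ because $B_{j}^{k}(x)=0$ for $k<j$, and recognize $\binom{n}{k}y^{k}(1-y)^{n-k}$ as $B_{k}^{n}(y)$ via (\ref{1BB3a}). No substantive difference from the paper's argument.
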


\begin{proof}
By equations (\ref{1BB3}) and (\ref{1BB1Sub})%
\[
\sum_{n=j}^{\infty}B_{j}^{n}(xy)\frac{t^{n}}{n!}=\left(  \sum_{n=0}^{\infty
}B_{j}^{n}(x)y^{n}\frac{t^{n}}{n!}\right)  \left(  \sum_{n=0}^{\infty}%
\frac{\left(  1-y\right)  ^{n}t^{n}}{n!}\right)  .
\]
Therefore%
\[
\sum_{n=j}^{\infty}B_{j}^{n}(xy)\frac{t^{n}}{n!}=\sum_{n=j}^{\infty}\left(
{\displaystyle\sum\limits_{k=j}^{n}}
B_{j}^{k}(x)\frac{y^{k}\left(  1-y\right)  ^{n-k}}{k!\left(  n-k\right)
!}\right)  t^{n}.
\]
Substituting (\ref{1BB3a}) into the above equation, we arrive at the desired result.
\end{proof}

\begin{remark}
Theorem \ref{Theorem3} is a bit tricky to prove with algebraic manipulations.
Goldman \cite{GoldmanBook3}-\cite[Chapter 5, pages 299-306]{GoldmanBook2}
proved this identity algebraically. He also proved the following related
subdivision identities:%
\[
B_{j}^{n}(\left(  1-y\right)  x+y)=%
{\displaystyle\sum\limits_{k=0}^{j}}
B_{j-k}^{n-k}(x)B_{k}^{n}(y),
\]
and%
\[
B_{j}^{n}(\left(  1-y\right)  x+yz)=%
{\displaystyle\sum\limits_{k=0}^{n}}
\left(
{\displaystyle\sum\limits_{p+q=j}}
B_{p}^{n-k}(x)B_{q}^{k}(z)\right)  B_{k}^{n}(y).
\]
For additional identities, see cf. \cite{GoldmanBook3}-\cite[Chapter 5, pages
299-306]{GoldmanBook2}.
\end{remark}

\subsection{Formula for the monomials in terms of the Bernstein basis
functions}

Multiplying both sides of (\ref{1BB3}) by $\left(
\begin{array}
[c]{c}%
k\\
l
\end{array}
\right)  $, we get%
\[
\left(
\begin{array}
[c]{c}%
k\\
l
\end{array}
\right)  \frac{\left(  xt\right)  ^{k}}{k!}e^{t(1-x)}=\left(
\begin{array}
[c]{c}%
k\\
l
\end{array}
\right)  \sum_{n=0}^{\infty}B_{k}^{n}(x)\frac{t^{n}}{n!}.
\]
Summing both sides of the above equation over $k$, we obtain the following
functional equation, which is used to derive a formula for the monomials in
terms of the Bernstein basis functions:%
\begin{equation}
\frac{x^{l}t^{l}}{l!}e^{t}=\sum_{k=0}^{\infty}\left(
\begin{array}
[c]{c}%
k\\
l
\end{array}
\right)  f_{\mathbb{B},k}(x,t). \label{A-4}%
\end{equation}

\begin{theorem}%
\[
\left(
\begin{array}
[c]{c}%
n\\
l
\end{array}
\right)  x^{l}=%
{\displaystyle\sum\limits_{l=0}^{k}}
\left(
\begin{array}
[c]{c}%
k\\
l
\end{array}
\right)  B_{k}^{n}(x)
\]

\end{theorem}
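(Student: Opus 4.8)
The plan is to read the identity off the functional equation (\ref{A-4}), expanding both sides as power series in $t$ and matching the coefficients of $t^{n}/n!$, exactly as in the proofs of Theorems \ref{T2} and \ref{T3}.

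First I would expand the left-hand side of (\ref{A-4}). Using $e^{t}=\sum_{m=0}^{\infty}t^{m}/m!$ and multiplying by $x^{l}t^{l}/l!$, the substitution $n=l+m$ together with $\frac{1}{l!\,(n-l)!}=\frac{1}{n!}\binom{n}{l}$ gives
\[
\frac{x^{l}t^{l}}{l!}\,e^{t}=\sum_{n=l}^{\infty}\binom{n}{l}\,x^{l}\,\frac{t^{n}}{n!}.
\]

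Next I would expand the right-hand side by inserting the definition (\ref{A-0}) of $f_{\mathbb{B},k}(x,t)$ and interchanging the two summations so that a factor $t^{n}/n!$ can be isolated:
\[
\sum_{k=0}^{\infty}\binom{k}{l}f_{\mathbb{B},k}(x,t)
=\sum_{n=0}^{\infty}\left(\sum_{k=0}^{\infty}\binom{k}{l}B_{k}^{n}(x)\right)\frac{t^{n}}{n!}.
\]
The interchange is the only step that demands any attention, but it is benign: for each fixed $n$ the inner coefficient is really a finite sum, since $B_{k}^{n}(x)=0$ once $k>n$ (the binomial $\binom{n}{k}$ vanishes) and $\binom{k}{l}=0$ once $k<l$. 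Hence the inner sum collapses to a sum over $l\le k\le n$, and no convergence issue arises.

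Finally, comparing the coefficients of $t^{n}/n!$ in the two expansions yields
\[
\binom{n}{l}\,x^{l}=\sum_{k=l}^{n}\binom{k}{l}B_{k}^{n}(x),
\]
which is the desired expression for the monomial $\binom{n}{l}x^{l}$ in the Bernstein basis. I anticipate no genuine obstacle beyond bookkeeping of the summation ranges; the one point worth flagging is that the sum on the right is over $k$ running from $l$ to $n$ (so the printed summation index should read $k$ rather than $l$), and that both sides vanish when $n<l$.
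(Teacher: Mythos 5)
Your proposal is correct and follows essentially the same route as the paper: expand both sides of the functional equation (\ref{A-4}) in powers of $t$ and compare the coefficients of $t^{n}/n!$, with the paper writing the right-hand side directly as $\sum_{n}\bigl(\sum_{k=0}^{n}\binom{k}{l}B_{k}^{n}(x)\bigr)t^{n}/n!$ just as you do after justifying the interchange. Your observation that the summation in the printed statement should run over $k$ from $l$ (or $0$) to $n$, rather than over $l$ from $0$ to $k$, correctly identifies a typo in the theorem as stated.
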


\begin{proof}
Combining (\ref{A-0}) and (\ref{A-4}), we get%
\[
\frac{x^{l}}{l!}\sum_{n=0}^{\infty}\frac{t^{n+l}}{n!}=\sum_{n=0}^{\infty
}\left(  \sum_{k=0}^{n}\left(
\begin{array}
[c]{c}%
k\\
l
\end{array}
\right)  B_{k}^{n}(x)\right)  \frac{t^{n}}{n!}.
\]

Therefore%
\[
\sum_{n=0}^{\infty}\left(  \left(
\begin{array}
[c]{c}%
n\\
l
\end{array}
\right)  x^{l}\right)  \frac{t^{n}}{n!}=\sum_{n=0}^{\infty}\left(  \sum
_{k=0}^{n}\left(
\begin{array}
[c]{c}%
k\\
l
\end{array}
\right)  B_{k}^{n}(x)\right)  \frac{t^{n}}{n!}.
\]
Comparing the coefficients of $\frac{t^{n}}{n!}$ on the both sides of the
above equation, we arrive at the desired result.
\end{proof}

\subsection{Differentiating the Bernstein basis functions}

In this section we give higher order derivatives of the Bernstein basis
functions. We begin by observing that%
\begin{equation}
f_{\mathbb{B},k}(x,t)=g_{k}(t,x)h(t,x), \label{n1}%
\end{equation}
where%
\[
g_{k}(t,x)=\frac{t^{k}x^{k}}{k!}%
\]
and%
\[
h(t,x)=e^{(1-x)t}.
\]

Using Leibnitz's formula for the $l$th derivative, with respect to $x$, we
obtain the following \textit{higher order partial differential equation}:%
\begin{equation}
\frac{\partial^{l}f_{\mathbb{B},k}(x,t)}{\partial x^{l}}=%
{\displaystyle\sum\limits_{j=0}^{l}}
\left(
\begin{array}
[c]{c}%
l\\
j
\end{array}
\right)  \left(  \frac{\partial^{j}g_{k}(t,x)}{\partial x^{j}}\right)  \left(
\frac{\partial^{l-j}h(t,x)}{\partial x^{l-j}}\right)  . \label{n2}%
\end{equation}
From this equation, we arrive at the following theorem:

\begin{theorem}
\label{TeoX}%
\begin{equation}
\frac{\partial^{l}f_{\mathbb{B},k}(x,t)}{\partial x^{l}}=%
{\displaystyle\sum\limits_{j=0}^{l}}
\left(
\begin{array}
[c]{c}%
l\\
j
\end{array}
\right)  (-1)^{l-j}t^{l}f_{\mathbb{B},k-j}(x,t). \label{A-6}%
\end{equation}

\end{theorem}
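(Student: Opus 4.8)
The plan is to start directly from the Leibniz expansion (\ref{n2}) and simply evaluate the two families of partial derivatives appearing there. Since $g_{k}(t,x)=t^{k}x^{k}/k!$ is a monomial in $x$ of degree $k$, differentiating $j$ times gives $\partial^{j}g_{k}/\partial x^{j}=t^{k}x^{k-j}/(k-j)!$ for $j\le k$ (and zero for $j>k$). The key observation is that this equals $t^{j}g_{k-j}(t,x)$, so differentiating $g_{k}$ reproduces a lower-index factor $g_{k-j}$ up to a power of $t$. Likewise, because $h(t,x)=e^{(1-x)t}$, each $x$-derivative brings down a factor $-t$, whence $\partial^{l-j}h/\partial x^{l-j}=(-1)^{l-j}t^{l-j}h(t,x)$.

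Next I would substitute these two evaluations into (\ref{n2}). The product of the $t$-powers combines as $t^{j}\cdot t^{l-j}=t^{l}$, which factors out of the sum, and the surviving product $g_{k-j}(t,x)h(t,x)$ is exactly $f_{\mathbb{B},k-j}(x,t)$ by the factorization (\ref{n1}) applied with index $k-j$ in place of $k$. Reassembling, the right-hand side of (\ref{n2}) becomes $\sum_{j=0}^{l}\binom{l}{j}(-1)^{l-j}t^{l}f_{\mathbb{B},k-j}(x,t)$, which is precisely the claimed identity (\ref{A-6}).

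The only point requiring care is the bookkeeping at the upper end of the sum. When $j>k$ the factor $\partial^{j}g_{k}/\partial x^{j}$ vanishes, so those terms drop out; this is consistent with adopting the convention $f_{\mathbb{B},m}(x,t)=0$ for $m<0$ (equivalently $B_{m}^{n}\equiv 0$), under which the stated formula holds verbatim with the fixed upper limit $l$. I do not expect any genuine obstacle: the entire argument reduces to two one-line derivative computations followed by recognizing the product $g_{k-j}h$ as $f_{\mathbb{B},k-j}$. The mild subtlety is purely notational, namely interpreting the lower-index generating functions with the zero convention so that the range of summation can be kept independent of the size of $k$.
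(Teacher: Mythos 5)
Your proposal is correct and follows exactly the route the paper intends: it applies the Leibniz expansion (\ref{n2}) to the factorization (\ref{n1}) and evaluates the two derivative families, which is precisely what the paper's one-line proof ("follows immediately from (\ref{n2})") leaves implicit. Your version simply supplies the omitted computations and the sensible convention $f_{\mathbb{B},m}\equiv 0$ for $m<0$.
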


\begin{proof}
Formula (\ref{A-6}) follows immediately from (\ref{n2}).
\end{proof}

Applying Theorem \ref{TeoX}, we obtain a new derivation for the higher order
derivatives of the Bernstein basis functions.

\begin{theorem}
\label{TeoX1}%
\begin{equation}
\frac{d^{l}B_{k}^{n}(x)}{dx^{l}}=\frac{n!}{(n-l)!}%
{\displaystyle\sum\limits_{j=0}^{l}}
(-1)^{l-j}\left(
\begin{array}
[c]{c}%
l\\
j
\end{array}
\right)  B_{k-j}^{n-l}(x). \label{A7}%
\end{equation}

\end{theorem}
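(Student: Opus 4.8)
The plan is to obtain (\ref{A7}) by comparing the coefficients of $t^{n}/n!$ on the two sides of the partial differential equation (\ref{A-6}) established in Theorem~\ref{TeoX}. Since (\ref{A-6}) is an identity between formal power series in $t$ whose coefficients are functions of $x$, extracting the $n$th coefficient from each side should directly produce a relation between the $l$th derivative $d^{l}B_{k}^{n}(x)/dx^{l}$ and the lower-degree basis functions $B_{k-j}^{n-l}(x)$.

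First I would expand the left-hand side of (\ref{A-6}) using the definition (\ref{A-0}). Differentiating the series for $f_{\mathbb{B},k}(x,t)$ term by term in $x$ gives
\[
\frac{\partial^{l}f_{\mathbb{B},k}(x,t)}{\partial x^{l}}=\sum_{n=0}^{\infty}\frac{d^{l}B_{k}^{n}(x)}{dx^{l}}\,\frac{t^{n}}{n!},
\]
so that the coefficient of $t^{n}/n!$ on the left is precisely $d^{l}B_{k}^{n}(x)/dx^{l}$. Next I would expand the right-hand side of (\ref{A-6}), inserting (\ref{A-0}) for each factor $f_{\mathbb{B},k-j}(x,t)$, which yields
\[
\sum_{j=0}^{l}\binom{l}{j}(-1)^{l-j}t^{l}\sum_{m=0}^{\infty}B_{k-j}^{m}(x)\,\frac{t^{m}}{m!}.
\]

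The decisive step is to absorb the prefactor $t^{l}$ by the index shift $n=m+l$, which converts $t^{m+l}/m!$ into $t^{n}/(n-l)!$, i.e. into $\tfrac{n!}{(n-l)!}\,t^{n}/n!$. After this reindexing the coefficient of $t^{n}/n!$ on the right becomes
\[
\frac{n!}{(n-l)!}\sum_{j=0}^{l}(-1)^{l-j}\binom{l}{j}B_{k-j}^{n-l}(x).
\]
Equating the two extracted coefficients then gives (\ref{A7}) at once.

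I expect the sole nontrivial bookkeeping point — the ``hard part'' of an otherwise mechanical argument — to be exactly this index shift $m\mapsto n-l$ forced by the factor $t^{l}$: it is what generates the falling-factorial normalization $n!/(n-l)!$ and realigns the summation so that both sides are compared at the same power $t^{n}$. Everything else reduces to term-by-term differentiation of a convergent power series and linearity of coefficient extraction.
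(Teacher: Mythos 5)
Your proposal is correct and follows exactly the paper's own proof: substitute the series (\ref{A-0}) into both sides of (\ref{A-6}), absorb the factor $t^{l}$ by reindexing, and compare coefficients of $t^{n}/n!$. The paper writes the resulting normalization as $\binom{n}{l}\,l!$ rather than $n!/(n-l)!$, but these are identical, so there is no substantive difference.
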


\begin{proof}
By substituting the right hand side of (\ref{A-0}) into (\ref{A-6}), we get%
\[
\sum_{n=0}^{\infty}\left(  \frac{d^{l}B_{k}^{n}(x)}{dx^{l}}\right)
\frac{t^{n}}{n!}=\sum_{n=0}^{\infty}\left(
{\displaystyle\sum\limits_{j=0}^{l}}
(-1)^{l-j}\left(
\begin{array}
[c]{c}%
l\\
j
\end{array}
\right)  B_{k-j}^{n}(x)\right)  \frac{t^{n+l}}{n!}.
\]
Therefore%
\[
\sum_{n=0}^{\infty}\left(  \frac{d^{l}B_{k}^{n}(x)}{dx^{l}}\right)
\frac{t^{n}}{n!}=\sum_{n=0}^{\infty}\left(
{\displaystyle\sum\limits_{j=0}^{l}}
(-1)^{l-j}\left(
\begin{array}
[c]{c}%
l\\
j
\end{array}
\right)  \left(
\begin{array}
[c]{c}%
n\\
l
\end{array}
\right)  l!B_{k-j}^{n-l}(x)\right)  \frac{t^{n}}{n!}%
\]
Comparing the coefficients of $\frac{t^{n}}{n!}$ on the both sides of the
above equation, we arrive at the desired result.
\end{proof}

Substituting $l=1$ into (\ref{A7}), we arrive at the following \ standard corollary:

\begin{corollary}
\label{BeT-2}%
\[
\frac{d}{dx}B_{k}^{n}(x)=n\left(  B_{k-1}^{n-1}(x)-B_{k}^{n-1}(x)\right)  .
\]
cf. \cite{AcikgozSerkan}-\cite{Simsek Acikgoz}.
\end{corollary}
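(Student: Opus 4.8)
The plan is to obtain this corollary as the simplest nontrivial specialization of \thmref{TeoX1}, setting $l=1$ in formula (\ref{A7}). So the entire argument is a direct substitution followed by evaluation of a two-term sum; there is no need to revisit the generating function itself, since all the analytic work has already been absorbed into (\ref{A7}).

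First I would handle the prefactor: with $l=1$ the factor $\frac{n!}{(n-l)!}$ collapses to $\frac{n!}{(n-1)!}=n$, which accounts for the overall factor of $n$ in the statement. Next I would expand the finite sum $\sum_{j=0}^{l}(-1)^{l-j}\binom{l}{j}B_{k-j}^{n-l}(x)$, which for $l=1$ runs over only $j=0$ and $j=1$. The $j=0$ term contributes $(-1)^{1}\binom{1}{0}B_{k}^{n-1}(x)=-B_{k}^{n-1}(x)$, and the $j=1$ term contributes $(-1)^{0}\binom{1}{1}B_{k-1}^{n-1}(x)=B_{k-1}^{n-1}(x)$. Summing these gives $B_{k-1}^{n-1}(x)-B_{k}^{n-1}(x)$, and multiplying by the prefactor $n$ produces exactly $n\bigl(B_{k-1}^{n-1}(x)-B_{k}^{n-1}(x)\bigr)$, as claimed.

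I do not expect any genuine obstacle in this computation; the only point requiring a word of care is the index convention at the boundary. When $k=0$ the term $B_{-1}^{n-1}(x)$ appears, and when $k=n$ the term $B_{n}^{n-1}(x)$ appears, both of which lie outside the admissible range $0\le k\le n$. These are interpreted as zero, consistently with the vanishing of the binomial coefficient $\binom{n-1}{m}$ for $m<0$ or $m>n-1$ in \eqref{1BB3a}, so the two-term formula remains valid at the endpoints. With this convention the specialization is immediate and the corollary follows at once from \thmref{TeoX1}.
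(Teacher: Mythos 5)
Your proposal is correct and matches the paper's approach exactly: the paper likewise obtains the corollary by substituting $l=1$ into formula (\ref{A7}) of Theorem \ref{TeoX1}, and your explicit evaluation of the two-term sum and the prefactor $\frac{n!}{(n-1)!}=n$ is just the spelled-out version of that substitution. Your remark on the boundary convention $B_{-1}^{n-1}(x)=B_{n}^{n-1}(x)=0$ is a sensible addition that the paper leaves implicit.
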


\subsection{\textbf{Recurrence Relation}}

In the previous section we computed the derivative of (\ref{n1}) with respect
to $x$ to derive a derivative formula for the Bernstein basis functions. In
this section we are going to differentiate (\ref{n1}) with respect to $t$ to
derive a recurrence relation for the Bernstein basis functions.

Using Leibnitz's formula for the $v$th derivative, with respect to $t$, we
obtain the following \textit{higher order partial differential equation}:%
\begin{equation}
\frac{\partial^{v}f_{\mathbb{B},k}(x,t)}{\partial t^{v}}=%
{\displaystyle\sum\limits_{j=0}^{v}}
\left(
\begin{array}
[c]{c}%
v\\
j
\end{array}
\right)  \left(  \frac{\partial^{j}g_{k}(t,x)}{\partial t^{j}}\right)  \left(
\frac{\partial^{v-j}h(t,x)}{\partial t^{v-j}}\right)  . \label{n3}%
\end{equation}
From the above equation, we have the following theorem:

\begin{theorem}
\label{TheoremRECUr}%
\begin{equation}
\frac{\partial^{v}f_{\mathbb{B},k}(x,t)}{\partial t^{v}}=%
{\displaystyle\sum\limits_{j=0}^{v}}
B_{j}^{v}(x)f_{\mathbb{B},k-j}(x,t). \label{A8}%
\end{equation}

\end{theorem}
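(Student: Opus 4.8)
The plan is to evaluate the Leibniz expansion (\ref{n3}) explicitly by computing the two families of $t$-derivatives that appear in it, and then to recognize the resulting coefficients as Bernstein basis functions. The starting point is the factorization (\ref{n1}), namely $f_{\mathbb{B},k}(x,t)=g_{k}(t,x)h(t,x)$ with $g_{k}(t,x)=t^{k}x^{k}/k!$ and $h(t,x)=e^{(1-x)t}$, so that (\ref{n3}) already decomposes $\partial^{v}f_{\mathbb{B},k}/\partial t^{v}$ into a sum of products of $t$-derivatives of $g_{k}$ and of $h$.

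First I would differentiate $h$. Since $h(t,x)=e^{(1-x)t}$, each $t$-derivative brings down a factor $(1-x)$, so that $\partial^{v-j}h/\partial t^{v-j}=(1-x)^{v-j}h(t,x)$. Next I would differentiate $g_{k}$. Because $g_{k}(t,x)=x^{k}t^{k}/k!$ is, up to the constant $x^{k}/k!$, a monomial of degree $k$ in $t$, its $j$th derivative is $\partial^{j}g_{k}/\partial t^{j}=x^{k}t^{k-j}/(k-j)!$ for $j\le k$. The key manipulation is to rewrite this as $x^{j}\bigl(x^{k-j}t^{k-j}/(k-j)!\bigr)=x^{j}g_{k-j}(t,x)$, expressing the derivative of $g_{k}$ back in terms of a lower-index $g$.

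Substituting both derivative formulas into (\ref{n3}) yields
\[
\frac{\partial^{v}f_{\mathbb{B},k}(x,t)}{\partial t^{v}}=\sum_{j=0}^{v}\binom{v}{j}x^{j}(1-x)^{v-j}\,g_{k-j}(t,x)h(t,x).
\]
Two identifications then finish the argument. By (\ref{n1}) the product $g_{k-j}(t,x)h(t,x)$ is exactly $f_{\mathbb{B},k-j}(x,t)$, while by the definition (\ref{1BB3a}) the coefficient $\binom{v}{j}x^{j}(1-x)^{v-j}$ is precisely $B_{j}^{v}(x)$. These substitutions transform the displayed sum into $\sum_{j=0}^{v}B_{j}^{v}(x)f_{\mathbb{B},k-j}(x,t)$, which is (\ref{A8}).

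The only point requiring care is the range of $j$: when $j>k$ the monomial $t^{k}$ has been differentiated more than $k$ times, so $\partial^{j}g_{k}/\partial t^{j}=0$ and the corresponding term vanishes. This is consistent with the convention that $f_{\mathbb{B},k-j}$, equivalently $B_{k-j}^{n}$, is zero for a negative index $k-j<0$, so no separate bookkeeping is needed and the sum may be written over the full range $0\le j\le v$. I expect this boundary convention to be the only mild obstacle; the remainder is a direct computation.
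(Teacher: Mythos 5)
Your proof is correct and follows the same route as the paper, which simply asserts that (\ref{A8}) ``follows immediately from (\ref{n3})''; you have filled in exactly the intended computation, namely $\partial^{j}g_{k}/\partial t^{j}=x^{j}g_{k-j}$, $\partial^{v-j}h/\partial t^{v-j}=(1-x)^{v-j}h$, and the identification $\binom{v}{j}x^{j}(1-x)^{v-j}=B_{j}^{v}(x)$. Your remark on the vanishing of terms with $j>k$ is a worthwhile detail the paper leaves implicit.
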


\begin{proof}
Formula (\ref{A8}) follows immediately from (\ref{n3}).
\end{proof}

Using definition (\ref{1BB3}) and (\ref{1BB3a}) in Theorem \ref{TheoremRECUr},
we obtain a recurrence relation for the Bernstein basis functions:

\begin{theorem}
\label{TheoremRECUrTT}%
\begin{equation}
B_{k}^{n}(x)=%
{\displaystyle\sum\limits_{j=0}^{v}}
B_{j}^{v}(x)B_{k-j}^{n-v}(x). \label{n5}%
\end{equation}

\end{theorem}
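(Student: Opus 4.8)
The plan is to convert the functional-equation identity (\ref{A8}) of \thmref{TheoremRECUr} back into a statement about the basis functions $B_k^n(x)$ themselves, by inserting the defining power series (\ref{A-0}) on both sides and matching coefficients of $t^n/n!$. This is exactly the coefficient-comparison mechanism already used in the proofs of \thmref{T2}, \thmref{T3} and \thmref{TeoX1}, now applied to the $t$-derivative relation (\ref{A8}) rather than to a relation in $x$.

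First I would expand the left-hand side of (\ref{A8}). Substituting (\ref{A-0}) and differentiating the series $v$ times term by term in $t$, via $\dfrac{\partial^v}{\partial t^v}\dfrac{t^n}{n!}=\dfrac{t^{n-v}}{(n-v)!}$ (so that every term with $n<v$ is annihilated), yields a power series whose coefficient of $t^{n-v}/(n-v)!$ is $B_k^n(x)$; after the shift $n\mapsto n+v$ this says the coefficient of $t^n/n!$ is $B_k^{n+v}(x)$. For the right-hand side I would replace each $f_{\mathbb{B},k-j}(x,t)$ by its series (\ref{A-0}) and interchange the finite $j$-sum with the $n$-sum, collecting everything into a single power series whose coefficient of $t^n/n!$ is $\sum_{j=0}^{v}B_j^v(x)B_{k-j}^{n}(x)$. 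Equating the two coefficient expressions gives $B_k^{n+v}(x)=\sum_{j=0}^{v}B_j^v(x)B_{k-j}^{n}(x)$, and reverting the shift $n\mapsto n-v$ produces the claimed identity (\ref{n5}).

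The only delicate point is the bookkeeping of the index ranges. Unlike in \thmref{TeoX1}, where the shift is caused by an explicit factor $t^l$ on the right-hand side and therefore drags in a combinatorial prefactor $n!/(n-l)!$, here the shift arises solely from differentiating the left-hand side, so no such prefactor appears and the polynomials $B_j^v(x)$ enter as the bare coefficients of the series. One must, however, adopt the standing convention that $B_{k-j}^{m}(x)=0$ whenever $k-j<0$ or $m<k-j$, so that the out-of-range summands drop out silently and the alignment of the two sides after reindexing is exact. Once these conventions are fixed the argument is entirely formal, with no analytic difficulty, since all the series involved are the everywhere-convergent exponential-type series of (\ref{1BB3}).
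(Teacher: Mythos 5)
Your proposal is correct and follows essentially the same route as the paper: substitute the series (\ref{A-0}) into both sides of (\ref{A8}), differentiate the left-hand series $v$ times term by term, reindex by $n\mapsto n-v$ (or $n\mapsto n+v$, equivalently), and compare coefficients of $t^n/n!$. Your added remarks on the vanishing convention for out-of-range indices and the absence of a $n!/(n-l)!$ prefactor are accurate but do not change the argument.
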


\begin{proof}
By substituting the right hand side of (\ref{A-0}) into (\ref{A8}), we get%
\[
\frac{\partial^{v}}{\partial t^{v}}\left(  \sum_{n=0}^{\infty}B_{k}%
^{n}(x)\frac{t^{n}}{n!}\right)  =\sum_{n=0}^{\infty}\left(
{\displaystyle\sum\limits_{j=0}^{v}}
B_{j}^{v}(x)B_{k-j}^{n}(x)\right)  \frac{t^{n}}{n!}.
\]
Therefore%
\[
\sum_{n=v}^{\infty}B_{k}^{n}(x)\frac{t^{n-v}}{\left(  n-v\right)  !}%
=\sum_{n=0}^{\infty}\left(
{\displaystyle\sum\limits_{j=0}^{v}}
B_{j}^{v}(x)B_{k-j}^{n}(x)\right)  \frac{t^{n}}{n!}.
\]
From the above equation, we get%
\[
\sum_{n=v}^{\infty}B_{k}^{n}(x)\frac{t^{n-v}}{\left(  n-v\right)  !}%
=\sum_{n=v}^{\infty}\left(
{\displaystyle\sum\limits_{j=0}^{v}}
B_{j}^{v}(x)B_{k-j}^{n-v}(x)\right)  \frac{t^{n-v}}{\left(  n-v\right)  !}.
\]
Comparing the coefficients of $\frac{t^{n}}{n!}$ on the both sides of the
above equation, we arrive at the desired result.
\end{proof}

\begin{remark}
Setting $v=1$ in (\ref{n5}), one obtains the standard recurrence%
\[
B_{k}^{n}(x)=(1-x)B_{k}^{n-1}(x)+xB_{k-1}^{n-1}(x).
\]

\end{remark}

\subsection{Degree raising}

In this section we present a functional equation which we apply to provide a
new proof of the degree raising formula for the Bernstein polynomials.

From (\ref{1BB3}), we obtain the following \textbf{functional equation}%
\textit{:}%
\[
\left(  xt\right)  ^{d}f_{\mathbb{B},k}(x,t)=\frac{(k+d)!}{k!}f_{\mathbb{B}%
,k+d}(x,t).
\]
Therefore%
\begin{equation}
x^{d}B_{k}^{n}(x)=\frac{n!(k+d)!}{k!(n+d)!}B_{k+d}^{n+d}(x). \label{1BB4}%
\end{equation}
Substituting $d=1$ into the above equation, we have%
\begin{equation}
xB_{k}^{n}(x)=\frac{k+1}{n+1}B_{k+1}^{n+1}(x). \label{1BB4a}%
\end{equation}
The above relation can also be proved by (\ref{1BB3a}) cf. (\cite{GoldmanBOOK}%
, \cite{GoldmanBook2}, \cite{GoldmanBook3}).

From (\ref{1BB3}), we also get the following functional equation\textit{:}%
\[
\left(  xt\right)  ^{-d}f_{\mathbb{B},k}(x,t)=\frac{(k-d)!}{k!}f_{\mathbb{B}%
,k-d}(x,t).
\]
Therefore%
\[
(1-x)^{d}B_{k}^{n}(x)=\frac{n!(n+d-k)!}{\left(  n+d\right)  !(n-k)!}%
B_{k}^{n+d}(x).
\]
Substituting $d=1$, we have%
\begin{equation}
(1-x)B_{k}^{n}(x)=\frac{(n+1-k)}{\left(  n+1\right)  }B_{k}^{n+1}(x).
\label{n7}%
\end{equation}
Adding (\ref{1BB4a}) and (\ref{n7}), we get the standard degree elevation
formula%
\[
B_{k}^{n}(x)=\frac{1}{n+1}\left(  \left(  k+1\right)  B_{k+1}^{n+1}(x)+\left(
n+1-k\right)  B_{k}^{n+1}(x)\right)  .
\]

\section{New identities}

In this section, using alternative forms of the generating functions,
functional equations and Laplace transform, we give many new identities for
the Bernstein basis functions.

Using (\ref{1BB3}), we obtain the following \textit{functional equations:}%
\begin{equation}
f_{\mathbb{B},k_{1}}(x,t)f_{\mathbb{B},k_{2}}(x,t)=\left(
\begin{array}
[c]{c}%
k_{1}+k_{2}\\
k_{1}%
\end{array}
\right)  \frac{1}{2^{k_{1}+k_{2}}}f_{\mathbb{B},k_{1}+k_{2}}(x,2t), \label{A9}%
\end{equation}
and%
\begin{equation}
f_{\mathbb{B},k}(x,t)f_{\mathbb{B},k}(y,-t)=\frac{\left(  -xyt^{2}\right)
^{k}}{\left(  k!\right)  ^{2}}e^{t\left(  y-x\right)  }. \label{A9a}%
\end{equation}

\begin{theorem}%
\[
B_{k_{1}+k_{2}}^{n}(x)=\frac{2^{k_{1}+k_{2}-n}k_{1}!k_{2}!}{\left(
k_{1}+k_{2}\right)  !}%
{\displaystyle\sum\limits_{j=0}^{n}}
\left(
\begin{array}
[c]{c}%
n\\
j
\end{array}
\right)  B_{k_{1}}^{j}(x)B_{k_{2}}^{n-j}(x).
\]

\end{theorem}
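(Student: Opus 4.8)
The plan is to derive the claimed identity directly from the product functional equation $(\ref{A9})$, which already packages the main algebraic content. The left-hand side of $(\ref{A9})$ is a product of two exponential generating functions, so by Definition $\ref{Def-1}$ it should be expanded as a Cauchy product. Concretely, writing $f_{\mathbb{B},k_1}(x,t)=\sum_{n}B_{k_1}^{n}(x)\frac{t^n}{n!}$ and similarly for $k_2$, the product becomes
\[
f_{\mathbb{B},k_1}(x,t)f_{\mathbb{B},k_2}(x,t)=\sum_{n=0}^{\infty}\left(\sum_{j=0}^{n}\binom{n}{j}B_{k_1}^{j}(x)B_{k_2}^{n-j}(x)\right)\frac{t^n}{n!},
\]
using the standard convolution rule for products of exponential generating functions. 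This handles the right-hand side of the target identity.

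Next I would expand the right-hand side of $(\ref{A9})$ as a single series in $t$. Here the subtle point is the rescaling $t\mapsto 2t$ inside $f_{\mathbb{B},k_1+k_2}(x,2t)$. From $(\ref{A-0})$ we have $f_{\mathbb{B},k_1+k_2}(x,2t)=\sum_{n}B_{k_1+k_2}^{n}(x)\frac{(2t)^n}{n!}=\sum_{n}2^{n}B_{k_1+k_2}^{n}(x)\frac{t^n}{n!}$. Multiplying by the constant prefactor $\binom{k_1+k_2}{k_1}2^{-(k_1+k_2)}$ gives the coefficient of $\frac{t^n}{n!}$ on the right as $\binom{k_1+k_2}{k_1}2^{n-k_1-k_2}B_{k_1+k_2}^{n}(x)$.

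The final step is to equate coefficients of $\frac{t^n}{n!}$ on both sides, exactly as in the proofs of Theorems $\ref{T2}$ and $\ref{T3}$. This yields
\[
\binom{k_1+k_2}{k_1}2^{n-k_1-k_2}B_{k_1+k_2}^{n}(x)=\sum_{j=0}^{n}\binom{n}{j}B_{k_1}^{j}(x)B_{k_2}^{n-j}(x),
\]
and solving for $B_{k_1+k_2}^{n}(x)$ by dividing through by $\binom{k_1+k_2}{k_1}2^{n-k_1-k_2}$ produces the stated formula, once one rewrites $1/\binom{k_1+k_2}{k_1}=\frac{k_1!\,k_2!}{(k_1+k_2)!}$ and moves $2^{n-k_1-k_2}$ to the other side as $2^{k_1+k_2-n}$.

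The only real obstacle is establishing $(\ref{A9})$ itself, but since the excerpt states it as a given functional equation I may take it for granted; if I had to verify it, I would substitute the closed form $(\ref{1BB3})$ into each factor, collect the powers $t^{k_1+k_2}x^{k_1+k_2}$ and the exponential $e^{2(1-x)t}$, and recognize $e^{2(1-x)t}$ as the exponential appearing in $f_{\mathbb{B},k_1+k_2}(x,2t)$, with the binomial and power-of-two constants emerging from reconciling the factorials $k_1!\,k_2!$ against $(k_1+k_2)!$. Everything beyond that is routine generating-function bookkeeping, and the argument is essentially mechanical once the Cauchy product and the $t\mapsto 2t$ rescaling are handled carefully.
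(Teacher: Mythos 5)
Your proposal is correct and follows essentially the same route as the paper: both start from the functional equation (\ref{A9}), expand the left side as a Cauchy product of the two exponential generating functions, expand $f_{\mathbb{B},k_1+k_2}(x,2t)$ to pick up the factor $2^{n}$, and compare coefficients of $\frac{t^n}{n!}$. Your extra remark on how one would verify (\ref{A9}) from the closed form (\ref{1BB3}) is also accurate and matches how the paper derives its functional equations.
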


\begin{proof}
By substituting the right hand side of (\ref{A-0}) into (\ref{A9}), we get%
\[
\sum_{n=0}^{\infty}B_{k_{1}}^{n}(x)\frac{t^{n}}{n!}\sum_{n=0}^{\infty}%
B_{k_{2}}^{n}(x)\frac{t^{n}}{n!}=\sum_{n=0}^{\infty}B_{k_{1}+k_{2}}%
^{n}(x)\frac{2^{n-k_{1}-k_{2}}\left(  k_{1}+k_{2}\right)  !t^{n}}%
{n!k_{1}!k_{2}!}.
\]
Therefore%
\[
\sum_{n=0}^{\infty}\left(
{\displaystyle\sum\limits_{j=0}^{n}}
\left(
\begin{array}
[c]{c}%
n\\
j
\end{array}
\right)  B_{k_{1}}^{j}(x)B_{k_{2}}^{n-j}(x)\right)  \frac{t^{n}}{n!}%
=\sum_{n=0}^{\infty}B_{k_{1}+k_{2}}^{n}(x)\frac{2^{n-k_{1}-k_{2}}\left(
k_{1}+k_{2}\right)  !t^{n}}{n!k_{1}!k_{2}!}.
\]
Comparing the coefficients of $\frac{t^{n}}{n!}$ on the both sides of the
above equation, we arrive at the desired result.
\end{proof}

\begin{theorem}%
\[
\left(  -xy\right)  ^{k}\left(  y-x\right)  ^{n-2k}=\frac{\left(  k!\right)
^{2}}{(n)_{2k}}%
{\displaystyle\sum\limits_{j=0}^{n}}
\left(
\begin{array}
[c]{c}%
n\\
j
\end{array}
\right)  (-1)^{n-j}B_{k}^{j}(x)B_{k}^{n-j}(y)
\]
where
\[
\left(  n\right)  _{2k}=n\left(  n-1\right)  ...\left(  n-2k+1\right)  ,
\]
and $\left(  n\right)  _{0}=1$.
\end{theorem}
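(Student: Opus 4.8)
The plan is to exploit the functional equation (\ref{A9a}) in exactly the same way the preceding theorems use their functional equations: expand both sides as power series in $t$ and compare coefficients of $t^{n}/n!$. First I would take the left-hand side $f_{\mathbb{B},k}(x,t)f_{\mathbb{B},k}(y,-t)$ and substitute the series definition (\ref{A-0}) for each factor, being careful that the second factor is evaluated at $-t$, so it reads $\sum_{m=0}^{\infty}B_{k}^{m}(y)(-t)^{m}/m!$. Multiplying the two series via the Cauchy product gives
\[
f_{\mathbb{B},k}(x,t)f_{\mathbb{B},k}(y,-t)=\sum_{n=0}^{\infty}\left(\sum_{j=0}^{n}\binom{n}{j}(-1)^{n-j}B_{k}^{j}(x)B_{k}^{n-j}(y)\right)\frac{t^{n}}{n!},
\]
which is precisely the combination that appears on the right-hand side of the claimed identity.

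Next I would expand the right-hand side of (\ref{A9a}), namely $\frac{(-xyt^{2})^{k}}{(k!)^{2}}e^{t(y-x)}$, as a power series in $t$. Writing $e^{t(y-x)}=\sum_{r=0}^{\infty}(y-x)^{r}t^{r}/r!$ and absorbing the factor $t^{2k}$, the coefficient of $t^{n}/n!$ on this side is nonzero only when $n\ge 2k$, and a short computation with factorials produces $\frac{(-xy)^{k}}{(k!)^{2}}\cdot\frac{n!}{(n-2k)!}(y-x)^{n-2k}$. Recognizing $\frac{n!}{(n-2k)!}=(n)_{2k}$ with the falling-factorial notation defined in the statement, comparing the two coefficient expressions and solving for $(-xy)^{k}(y-x)^{n-2k}$ yields the asserted formula directly.

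The only genuine obstacle is bookkeeping rather than conceptual: verifying (\ref{A9a}) itself (which follows at once from the closed form (\ref{1BB3}), since $f_{\mathbb{B},k}(x,t)=t^{k}x^{k}e^{(1-x)t}/k!$ and $f_{\mathbb{B},k}(y,-t)=(-t)^{k}y^{k}e^{-(1-y)t}/k!$, whose product collapses the exponentials to $e^{t(y-x)}$ and gathers the prefactor $(-xyt^{2})^{k}/(k!)^{2}$), and then matching the factorial constants so that $\frac{n!}{(n-2k)!}$ is correctly identified as $(n)_{2k}$. I would double-check the sign: the $(-1)^{n-j}$ on the right of the identity must match the $(-1)^{m}=(-1)^{n-j}$ arising from evaluating the second generating function at $-t$, and the overall $(-1)^{k}$ hidden in $(-xy)^{k}$ must be consistent with the $(-t)^{k}$ factor. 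Once these constants are confirmed, comparing coefficients of $t^{n}/n!$ finishes the proof exactly as in Theorems~\ref{T2} and~\ref{T3}.
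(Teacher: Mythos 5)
Your proposal is correct and follows the paper's own argument essentially verbatim: expand both sides of the functional equation (\ref{A9a}) as power series in $t$, form the Cauchy product on the left, shift the $t^{2k}$ factor on the right to identify the coefficient of $t^{n}/n!$ as $\frac{(-xy)^{k}}{(k!)^{2}}(n)_{2k}(y-x)^{n-2k}$, and compare coefficients. Your verification of (\ref{A9a}) from the closed form (\ref{1BB3}) and your sign bookkeeping are exactly the checks the paper implicitly relies on.
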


\begin{proof}
Combining (\ref{A-0}) and (\ref{A9a}), we get%
\[
\left(  \sum_{n=0}^{\infty}B_{k}^{n}(x)\frac{t^{n}}{n!}\right)  \left(
\sum_{n=0}^{\infty}(-1)^{n}B_{k}^{n}(y)\frac{t^{n}}{n!}\right)  =\frac{\left(
-xy\right)  ^{k}}{\left(  k!\right)  ^{2}}\sum_{n=0}^{\infty}\frac{\left(
y-x\right)  ^{n}t^{n+2k}}{n!}.
\]
From the above equation, we obtain%
\[
\sum_{n=0}^{\infty}\left(
{\displaystyle\sum\limits_{j=0}^{n}}
\left(
\begin{array}
[c]{c}%
n\\
j
\end{array}
\right)  (-1)^{n-j}B_{k}^{j}(x)B_{k}^{n-j}(y)\right)  \frac{t^{n}}{n!}%
=\frac{\left(  -xy\right)  ^{k}}{\left(  k!\right)  ^{2}}\sum_{n=0}^{\infty
}\left(  n\right)  _{2k}\left(  y-x\right)  ^{n-2k}\frac{t^{n}}{n!}.
\]
Comparing the coefficients of $\frac{t^{n}}{n!}$ on the both sides of the
above equation, we arrive at the desired result.
\end{proof}

\begin{theorem}
\label{Tg1}Let $x\neq0$. For all positive integers $k$ and $n$, we have%
\[%
{\displaystyle\sum\limits_{j=0}^{n-k}}
\left(
\begin{array}
[c]{c}%
n\\
j
\end{array}
\right)  x^{j-k}B_{k}^{n-j}(x)=\left(
\begin{array}
[c]{c}%
n\\
k
\end{array}
\right)  .
\]

\end{theorem}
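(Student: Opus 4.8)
The plan is to read off the claimed identity as an equality of coefficients in the alternative generating function (\ref{g1}), which already packages the relevant Cauchy product. The key observation is that the left-hand sum is essentially a convolution of $B_k^{m}(x)$ against the sequence $x^m$, and the binomial weights $\binom{n}{j}$ are precisely what arise when one multiplies two exponential generating functions.

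First I would rewrite $x^{j-k}=x^{-k}x^{j}$ and observe that $B_k^{n-j}(x)$ vanishes whenever $n-j<k$, so the summation range $0\le j\le n-k$ may be harmlessly extended to $0\le j\le n$. This recasts the target sum as $x^{-k}\sum_{j=0}^{n}\binom{n}{j}x^{j}B_k^{n-j}(x)$, which is the $\tfrac{t^n}{n!}$-coefficient of a product of two series.

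Next I would expand $e^{xt}=\sum_{m=0}^{\infty}x^{m}\tfrac{t^{m}}{m!}$ and multiply it against the series (\ref{A-0}) for $f_{\mathbb{B},k}(x,t)$. The Cauchy product of these two exponential generating functions produces exactly $\sum_{n=0}^{\infty}\bigl(\sum_{j=0}^{n}\binom{n}{j}x^{j}B_k^{n-j}(x)\bigr)\tfrac{t^n}{n!}$ on the left-hand side of (\ref{g1}). On the right-hand side I would expand $\tfrac{x^{k}t^{k}e^{t}}{k!}=\tfrac{x^{k}}{k!}\sum_{m=0}^{\infty}\tfrac{t^{m+k}}{m!}$ and extract the coefficient of $\tfrac{t^n}{n!}$, which is $\tfrac{x^{k}}{k!}\cdot\tfrac{n!}{(n-k)!}=x^{k}\binom{n}{k}$.

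Comparing coefficients of $\tfrac{t^n}{n!}$ then yields $\sum_{j=0}^{n}\binom{n}{j}x^{j}B_k^{n-j}(x)=x^{k}\binom{n}{k}$. Dividing through by $x^{k}$ — this is the single place where the hypothesis $x\neq 0$ is needed — and restoring the upper limit to $n-k$ gives the theorem. There is no substantive obstacle; the only points demanding care are justifying the division by $x^{k}$ (hence the assumption $x\neq 0$) and verifying that truncating the sum at $n-k$ rather than $n$ loses nothing, both of which follow from the vanishing of $B_k^{n-j}(x)$ for $n-j<k$.
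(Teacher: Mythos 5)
Your proposal is correct and follows essentially the same route as the paper: both use the alternative generating function (\ref{g1}), expand the left side as a Cauchy product of $f_{\mathbb{B},k}(x,t)$ with $e^{xt}$ and the right side as $\frac{x^k}{k!}t^k e^t$, and compare coefficients of $\frac{t^n}{n!}$ before dividing by $x^k$. Your added care about extending/truncating the summation range via the vanishing of $B_k^{n-j}(x)$ for $n-j<k$ is a welcome clarification the paper glosses over.
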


\begin{proof}
By using (\ref{g1}), we obtain%
\[
\sum_{n=0}^{\infty}B_{k}^{n}(x)\frac{t^{n}}{n!}\sum_{n=0}^{\infty}x^{n}%
\frac{t^{n}}{n!}=\frac{t^{k}x^{k}}{k!}\sum_{n=0}^{\infty}\frac{t^{n}}{n!}.
\]
Therefore%
\[
\sum_{n=k}^{\infty}\left(  \sum_{j=0}^{n-k}\left(
\begin{array}
[c]{c}%
k\\
j
\end{array}
\right)  x^{j}B_{k}^{n-j}(x)\right)  \frac{t^{n}}{n!}=x^{k}\sum_{n=k}^{\infty
}\left(
\begin{array}
[c]{c}%
n\\
k
\end{array}
\right)  \frac{t^{n}}{n!}.
\]
Comparing the coefficients of $\frac{t^{n}}{n!}$ on the both sides of the
above equation, we arrive at the desired result.
\end{proof}

\begin{theorem}
\label{Tg2}For all positive integers $k$ and $n$, we have%
\[%
{\displaystyle\sum\limits_{j=0}^{n-k}}
(-1)^{j}\left(
\begin{array}
[c]{c}%
n\\
j
\end{array}
\right)  B_{k}^{n-j}(x)=(-1)^{n-k}\left(
\begin{array}
[c]{c}%
n\\
k
\end{array}
\right)  x^{n}.
\]

\end{theorem}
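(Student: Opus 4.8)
The plan is to mimic the proof of Theorem~\ref{Tg1}, but to exploit the alternative generating function (\ref{g2}) rather than (\ref{g1}). The appearance of the sign $(-1)^j$ on the left, together with the clean monomial $x^n$ on the right (with no extra weight of the form $x^{j-k}$), strongly suggests pairing $f_{\mathbb{B},k}$ with the factor $e^{-t}$, which is precisely what (\ref{g2}) supplies: $\sum_{n=0}^{\infty}B_k^n(x)\frac{t^n}{n!}\,e^{-t}=\frac{t^kx^ke^{-xt}}{k!}$.

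First I would substitute the series (\ref{A-0}) for $f_{\mathbb{B},k}$ and write $e^{-t}=\sum_{m=0}^{\infty}(-1)^m\frac{t^m}{m!}$, so that the left-hand side of (\ref{g2}) becomes a Cauchy product of two exponential generating functions. Collecting the coefficient of $t^n/n!$ produces the binomial convolution $\sum_{j=0}^{n}\binom{n}{j}(-1)^jB_k^{n-j}(x)$. The one point that needs a word of justification is the range of summation: since $B_k^{n-j}(x)=0$ whenever $n-j<k$, every term with $j>n-k$ vanishes, so the sum truncates exactly at $j=n-k$ and matches the left-hand side of the claimed identity.

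For the right-hand side I would expand $e^{-xt}=\sum_{m=0}^{\infty}(-x)^m\frac{t^m}{m!}$, multiply by $t^kx^k/k!$, and reindex by $n=m+k$. Extracting the coefficient of $t^n/n!$ then gives $\frac{x^k}{k!}\cdot(-x)^{n-k}\cdot\frac{n!}{(n-k)!}=(-1)^{n-k}\binom{n}{k}x^n$. Comparing the two coefficient expressions of $t^n/n!$ yields the theorem.

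I do not anticipate a genuine obstacle here; the whole argument reduces to a coefficient comparison once the correct generating function (\ref{g2}) has been selected. The only mildly delicate steps are the bookkeeping in the binomial convolution and the reindexing on the right, together with the observation that the vanishing of $B_k^{n-j}(x)$ for $n-j<k$ is exactly what converts the full convolution $\sum_{j=0}^{n}$ into the stated partial sum $\sum_{j=0}^{n-k}$.
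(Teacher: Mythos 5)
Your proposal is correct and follows essentially the same route as the paper's own proof, which likewise expands both sides of (\ref{g2}) as exponential series, forms the Cauchy product, and compares coefficients of $t^{n}/n!$. If anything, your explicit justification that the convolution truncates at $j=n-k$ because $B_{k}^{n-j}(x)=0$ for $n-j<k$ is more careful than the paper's version, whose intermediate display even misprints the binomial coefficient $\binom{n}{j}$ as $\binom{k}{j}$.
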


\begin{proof}
By using (\ref{g2}), we get%
\[
\sum_{n=0}^{\infty}B_{k}^{n}(x)\frac{t^{n}}{n!}\sum_{n=0}^{\infty}%
(-1)^{n}\frac{t^{n}}{n!}=\frac{t^{k}x^{k}}{k!}\sum_{n=0}^{\infty}(-1)^{n}%
x^{n}\frac{t^{n}}{n!}.
\]
Therefore%
\[
\sum_{n=k}^{\infty}\left(  \sum_{j=0}^{n-k}(-1)^{j}\left(
\begin{array}
[c]{c}%
k\\
j
\end{array}
\right)  B_{k}^{n-j}(x)\right)  \frac{t^{n}}{n!}=\sum_{n=k}^{\infty}\left(
\begin{array}
[c]{c}%
n\\
k
\end{array}
\right)  (-1)^{n-k}x^{n}\frac{t^{n}}{n!}.
\]
Comparing the coefficients of $\frac{t^{n}}{n!}$ on the both sides of the
above equation, we arrive at the desired result.
\end{proof}

\begin{theorem}
\label{Tg5}For all positive integers $k$ and $n$, we have%
\[%
{\displaystyle\sum\limits_{j=0}^{n-k}}
(-1)^{j}\left(
\begin{array}
[c]{c}%
n\\
j
\end{array}
\right)  \left(  1-x\right)  ^{j}B_{k}^{n-j}(x)=\left\{
\begin{array}
[c]{c}%
x^{k}\text{, for }x=k,\\
\\
0\text{, for }x\neq k.
\end{array}
\right.
\]

\end{theorem}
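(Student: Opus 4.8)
The plan is to mirror the proofs of Theorems~\ref{Tg1} and~\ref{Tg2}, this time using the third alternative form~(\ref{g3}) of the generating function. First I would expand the factor $e^{(x-1)t}$ on the left-hand side of~(\ref{g3}) as an exponential series. Since $e^{(x-1)t}=e^{-(1-x)t}$, we have $e^{(x-1)t}=\sum_{m=0}^{\infty}(-1)^{m}(1-x)^{m}\frac{t^{m}}{m!}$. Multiplying this against the series $\sum_{n=0}^{\infty}B_{k}^{n}(x)\frac{t^{n}}{n!}$ supplied by~(\ref{A-0}) and collecting the Cauchy product in exponential--generating--function form, the coefficient of $\frac{t^{n}}{n!}$ on the left becomes $\sum_{j=0}^{n}\binom{n}{j}(-1)^{j}(1-x)^{j}B_{k}^{n-j}(x)$.

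Next I would use the fact that $B_{k}^{m}(x)=0$ whenever $m<k$, which is immediate from the binomial coefficient $\binom{m}{k}$ in~(\ref{1BB3a}). This forces the requirement $n-j\ge k$, so the inner sum truncates to $0\le j\le n-k$, matching exactly the summation range in the left-hand side of the stated identity. On the right-hand side, $\frac{t^{k}x^{k}}{k!}$ is already a single monomial; rewriting it as $x^{k}\cdot\frac{t^{k}}{k!}$ shows that its coefficient of $\frac{t^{n}}{n!}$ equals $x^{k}$ when $n=k$ and vanishes otherwise. Comparing the coefficients of $\frac{t^{n}}{n!}$ on both sides then produces the claimed case split.

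The point that deserves care is the case split itself: the dichotomy is governed by whether the degree index $n$ equals $k$, since the right-hand side of~(\ref{g3}) carries only the single power $t^{k}$. (In particular the condition in the display should be read as $n=k$ versus $n\neq k$, the degrees being compared, rather than as a relation between the continuous variable $x$ and the index $k$.) Once the Cauchy-product expansion and the vanishing of $B_{k}^{m}$ for $m<k$ are in place, no further calculation is needed: the result drops out by comparing coefficients, exactly as in Theorems~\ref{Tg1} and~\ref{Tg2}. Thus I expect the only real obstacle to be bookkeeping in the truncation of the convolution sum, not any substantive analytic difficulty.
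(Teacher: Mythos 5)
Your proposal is correct and follows exactly the route the paper intends (it omits the proof, saying it parallels Theorem~\ref{Tg1}, i.e.\ expand the exponential factor in~(\ref{g3}), form the Cauchy product, and compare coefficients of $\frac{t^{n}}{n!}$). You are also right that the case split must be read as $n=k$ versus $n\neq k$ --- the ``$x=k$'' in the displayed statement is a typo, since the right-hand side of~(\ref{g3}) contributes only to the coefficient of $t^{k}$.
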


\begin{proof}
Proof of Theorem \ref{Tg5} is same as that of Theorem \ref{Tg1}. So we omit it.
\end{proof}

\section{Applications of the Laplace transform to the generating functions for
the Bernstein basis functions}

In this section, we give some applications of the Laplace transform to the
generating functions for the Bernstein basis functions. We obtain interesting
series representations for the Bernstein basis functions.

\begin{theorem}
\label{Tg3}Let $x\neq0$. For all positive integer $k$, we have%
\[
\sum_{n=0}^{\infty}xB_{k}^{n}(x)=1.
\]

\end{theorem}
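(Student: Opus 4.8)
The plan is to apply the Laplace transform in the variable $t$ to both sides of the closed-form generating function \eqref{1BB3}, exactly as the section heading suggests. On the series side \eqref{A-0} I would use the elementary transform $\mathcal{L}\{t^{n}/n!\}(s)=s^{-(n+1)}$ and interchange the sum with the Laplace integral, which should produce the Dirichlet-type series $\sum_{n=0}^{\infty}B_{k}^{n}(x)\,s^{-(n+1)}$. On the right-hand side of \eqref{1BB3} I would invoke the standard transform $\mathcal{L}\{t^{k}e^{at}\}(s)=k!\,(s-a)^{-(k+1)}$ with $a=1-x$, giving $\dfrac{x^{k}}{k!}\cdot\dfrac{k!}{(s-1+x)^{k+1}}=\dfrac{x^{k}}{(s-1+x)^{k+1}}$.

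Equating the two computations yields the series representation
\[
\sum_{n=0}^{\infty}\frac{B_{k}^{n}(x)}{s^{n+1}}=\frac{x^{k}}{(s-1+x)^{k+1}},
\]
which is itself the kind of ``interesting series representation'' promised in this section. To finish, I would specialize $s=1$: the denominator collapses to $x^{k+1}$, so the right-hand side becomes $x^{k}/x^{k+1}=1/x$, whence $\sum_{n=0}^{\infty}B_{k}^{n}(x)=1/x$. Multiplying through by $x$ gives $\sum_{n=0}^{\infty}xB_{k}^{n}(x)=1$, and this specialization is precisely where the hypothesis $x\neq 0$ is needed (so that $1/x$ is defined and the factor $x$ can be cleared).

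The step I expect to be the main obstacle is the \emph{convergence/interchange} justification rather than the algebra. Termwise application of the Laplace transform must be legitimated, and, more delicately, one must check that $s=1$ lies in the region where both sides are valid. Writing $B_{k}^{n}(x)=\binom{n}{k}x^{k}(1-x)^{n-k}$ and substituting $m=n-k$, the series reduces to $\dfrac{x^{k}}{s^{k+1}}\sum_{m=0}^{\infty}\binom{m+k}{k}\bigl(\tfrac{1-x}{s}\bigr)^{m}$, which by the generating identity $\sum_{m\ge 0}\binom{m+k}{k}z^{m}=(1-z)^{-(k+1)}$ converges iff $|1-x|<s$; at $s=1$ this forces $0<x<2$. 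I would therefore verify this geometric-series identity directly to confirm both the convergence at $s=1$ and the closed form, and note that the admissible range $0<x<2$ comfortably contains the natural domain $x\in(0,1]$ of the Bernstein basis, so the evaluation at $s=1$ is sound.
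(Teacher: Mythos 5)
Your proposal is essentially the paper's own argument: the paper integrates the weighted form (\ref{g2}), i.e.\ $f_{\mathbb{B},k}(x,t)e^{-t}$, over $t\in[0,\infty)$ and recognizes the resulting integrals as Laplace transforms at $x$, which is exactly your computation of $\mathcal{L}\{f_{\mathbb{B},k}(x,\cdot)\}(s)$ specialized to $s=1$. Your additional observation that the series only converges for $|1-x|<1$ (so the hypothesis $x\neq 0$ should really be $0<x<2$) is a genuine refinement of the paper's proof, which does not address convergence at all.
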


\begin{proof}
Integrate equation (\ref{g2}) (by parts) with respect to $t$ from $0$ to
$\infty$, we have%
\begin{equation}
\sum_{n=0}^{\infty}B_{k}^{n}(x)\frac{1}{n!}%
{\displaystyle\int\limits_{0}^{\infty}}
t^{n}e^{-t}dt=\frac{x^{k}}{k!}%
{\displaystyle\int\limits_{0}^{\infty}}
t^{k}e^{-xt}dt. \label{gg1}%
\end{equation}
If we appropriately use the case%
\[
x>0
\]
of the following Laplace transform of the function $f(t)=t^{k}$:%
\[%
\mathcal{L}%
(t^{k})=\frac{k!}{x^{k+1}},
\]
on the both sides of (\ref{gg1}), we find that%
\[
\sum_{n=0}^{\infty}B_{k}^{n}(x)=\frac{1}{x}.
\]
From the above equation, we arrive at the desired result.
\end{proof}

\begin{theorem}
\label{Tg4}Let $x\neq0$. For all positive integer $k$, we have%
\[
\sum_{n=0}^{\infty}(-1)^{n}\frac{B_{k}^{n}(x)}{x^{n+1}}=(-1)^{k}x^{k}.
\]

\end{theorem}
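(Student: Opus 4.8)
The plan is to reuse, almost verbatim, the Laplace-transform template that proved Theorem~\ref{Tg3}: I want to produce a single identity in the variable $t$ whose two sides, after integrating over $[0,\infty)$, collapse exactly to the two sides of the claim. The only feature of Theorem~\ref{Tg4} not present in Theorem~\ref{Tg3} is the alternating factor $(-1)^n$ together with the power $x^{-(n+1)}$ in the denominator. The factor $x^{-(n+1)}$ is precisely what the Laplace transform $\int_{0}^{\infty}t^{n}e^{-xt}\,dt=\frac{n!}{x^{n+1}}$ supplies (this is the same transform used in the proof of Theorem~\ref{Tg3}), while the sign $(-1)^n$ I would manufacture by evaluating the generating function at $-t$ before integrating.

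Concretely, I would start from (\ref{1BB3}) and replace $t$ by $-t$, which gives
\[
\sum_{n=0}^{\infty}(-1)^{n}B_{k}^{n}(x)\frac{t^{n}}{n!}=\frac{(-1)^{k}t^{k}x^{k}e^{(x-1)t}}{k!}.
\]
Multiplying both sides by $e^{-xt}$ collapses the exponential on the right, since $(x-1)t-xt=-t$, yielding
\[
\sum_{n=0}^{\infty}(-1)^{n}B_{k}^{n}(x)\frac{t^{n}}{n!}e^{-xt}=\frac{(-1)^{k}x^{k}}{k!}\,t^{k}e^{-t}.
\]
I would then integrate both sides over $t\in[0,\infty)$ and interchange sum and integral on the left. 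Using $\int_{0}^{\infty}t^{n}e^{-xt}\,dt=\frac{n!}{x^{n+1}}$ (valid for $x>0$), the left-hand side becomes $\sum_{n=0}^{\infty}(-1)^{n}\frac{B_{k}^{n}(x)}{x^{n+1}}$, exactly the quantity in the statement. On the right, $\int_{0}^{\infty}t^{k}e^{-t}\,dt=k!$ cancels the $k!$ in the denominator, leaving $(-1)^{k}x^{k}$. Comparing the two sides gives the identity. This parallels the step labelled (\ref{gg1}) in the proof of Theorem~\ref{Tg3}, with $e^{-t}$ and $e^{-xt}$ swapping roles.

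The algebra is routine; the genuine difficulty is analytic, namely justifying the termwise integration and the convergence of the resulting series. Unlike the companion identity $\int_{0}^{\infty}t^{k}e^{-t}\,dt=k!$ on the right, the series on the left does not converge absolutely for all $x>0$: writing $\frac{B_{k}^{n}(x)}{x^{n+1}}=\binom{n}{k}\frac{1}{x}\bigl(\frac{1-x}{x}\bigr)^{n-k}$ shows that the terms only decay when $\frac{|1-x|}{x}<1$, i.e. when $x>\frac12$, so the hypothesis $x\neq0$ as stated is weaker than what convergence actually requires (the same under-statement already occurs in Theorem~\ref{Tg3}). I would therefore either restrict to $x$ large enough that the geometric-type decay holds and invoke dominated convergence on $\sum_{n}(-1)^{n}B_{k}^{n}(x)\frac{t^{n}}{n!}e^{-xt}$, or, more safely, integrate the closed form $\frac{(-1)^{k}x^{k}}{k!}t^{k}e^{-t}$ directly and only afterwards expand, exactly as is done around (\ref{gg1}); this sidesteps the interchange and presents the result as the evaluation of a single convergent integral.
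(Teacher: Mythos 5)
Your proposal is correct and follows essentially the same route as the paper: the identity you derive by substituting $-t$ into (\ref{1BB3}) and multiplying by $e^{-xt}$ is exactly what the paper obtains by replacing $t$ with $-t$ in (\ref{g1}), after which both arguments integrate over $[0,\infty)$ and apply $\int_{0}^{\infty}t^{n}e^{-xt}\,dt=n!/x^{n+1}$. Your added observation that the series only converges for $x>\tfrac12$ (so the stated hypothesis $x\neq0$ is too weak) is a valid refinement that the paper omits.
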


\begin{proof}
Proof of Theorem \ref{Tg4} is same as that of Theorem \ref{Tg3}. That is if
we\ replace $t$ by $-t$\ in equation (\ref{g1}) and integrate by parts with
respect to $t$ from $0$ to $\infty$ and using Laplace transform of the
function $f(t)=t^{n}$, then we arrive at the desired result.
\end{proof}

\section{Further Remarks and Observations}

Fourier series of the Bernstein polynomials has been studied, without
generating functions, by Izumi \textit{et al}. \cite{izumi}. They investigated
many properties of the Fejer mean of the Fourier series of these polynomials.
Fourier transform of the Bernstein polynomials has also been given, without
generating functions, by Chui \textit{at al}. \cite{Chui}. By replacing $t$ by
$it$ in (\ref{g1})-(\ref{g3}), one may give applications of the Fourier
transform to the \textit{complex} generating functions for the Bernstein basis functions.

\begin{acknowledgement}
The author would like to thank Professor Ronald Goldman (Rice University,
Houston, USA) for his very valuable comments and his suggestions.

The present investigation was supported by the \textit{Scientific Research
Project Administration of Akdeniz University.}
\end{acknowledgement}

\end{document}